\documentclass{amsart}
\usepackage[T1]{fontenc}
\usepackage{textcomp}
\usepackage[margin=1in]{geometry}
\usepackage{graphicx}
\usepackage{amsmath,amsfonts,amssymb,,amsthm,hyperref,xcolor,array,bigdelim,cleveref,blkarray}
\usepackage{enumitem}
\usepackage[numbers,sort]{natbib}

\definecolor{dartmouthgreen}{rgb}{0.05, 0.5, 0.06}
\definecolor{deepskyblue}{rgb}{0.0, 0.75, 1.0}

\definecolor{aolongcrimson}{rgb}{0.78, 0.12, 0.18}

\usepackage{arydshln} 
\theoremstyle{definition}
\newtheorem{theorem}{Theorem}[section]
\newtheorem{remark}[theorem]{Remark}
\newtheorem{definition}[theorem]{Definition}
\newtheorem{proposition}[theorem]{Proposition}
\newtheorem{example}[theorem]{Example}
\def\KK{\mathbb{K}}
\usepackage{listings}
\definecolor{codedarkgreen}{RGB}{51, 133, 4}
\definecolor{codemaroon}{RGB}{133, 5, 63}
\definecolor{codeteal}{RGB}{0, 145, 109}
\definecolor{codepurple}{RGB}{123, 35, 125}

\lstdefinelanguage{Macaulay2}{
basicstyle=\normalsize\ttfamily,
  alsoletter=",
  classoffset=1,
  keywords={coefficients,toList,matrix,trace,random,norm,sub,apply,setRandomSeed,transpose,det,factor,all,netList,subsets,genericMatrix,needsPackage,presentation,generators,gens,selectInSubring,i1,i2,i3,i4,i5,i6,i7,i8,i9,i10,i11,i12,i13,i14,i15,i16,i17,i18,i19,i20,i21,flatten,ideal,frac,LUdecomposition,diagonalMatrix},
  keywordstyle={\color{blue}},
classoffset=2,
breaklines=true,
morekeywords={"EliminationTemplates","Engine","Brackets"},
keywordstyle={\color{codemaroon}},
classoffset=3,
morekeywords={QQ,CC,ZZ,CacheTable,Matrix,Eliminate},
keywordstyle={\color{codedarkgreen}},
classoffset=4,
morekeywords={restart,false,true,Weights,Limit,Lex,MonomialOrder,CoefficientRing},
keywordstyle={\color{codeteal}},
classoffset=5,
morekeywords={list,for,in,from,to,of},
keywordstyle={\color{codepurple}},
xleftmargin=1em,
xrightmargin=1em,
columns=fullflexible,
keepspaces=true,
stepnumber=1,
numbers=none,
captionpos=b,
showspaces=false,
frame=none
}

\newcommand{\QQ}{\mathbb{Q}}

\newcommand{\CC}{\mathbb{C}}

\DeclareMathOperator{\init}{in}
\DeclareMathOperator{\tr}{Tr}

\title{Elimination Templates in Macaulay2}

\author[Batavia]{Manav Batavia}
\address{Department of Mathematics, Purdue University, 150 N University St., West Lafayette, IN~47907, USA}
\email{mbatavia@purdue.edu}

\author[Chen]{Cheng Chen}
\address{Department of Mathematics, University of Wisconsin, 480 Lincoln Dr., Madison, WI~53706, USA}
\email{chengchen@math.wisc.edu}

\author[Chlopecki]{Anna Natalie Chlopecki}
\address{Department of Mathematics, Purdue University, 150 N University St., West Lafayette, IN~47907, USA}
\email{achlopec@purdue.edu}

\author[Duff]{Timothy Duff}
\address{Department of Mathematics,  University of Missouri, 810 Rollins St, Columbia MO 65203, USA}
\email{tduff@missouri.edu}

\author[Huang]{William Huang}
\address{Department of Mathematics, University of Wisconsin, 480 Lincoln Dr., Madison, WI~53706, USA}
\email{whuang259@wisc.edu}

\author[Li]{Aolong Li}
\address{Meta Platforms, Inc., 1 Meta Way, Menlo Park, California 94025, USA}
\email{aolongli@meta.com}

\author[Shen]{Wanchun Shen}
\address{Department of Mathematics, Harvard University, 1 Oxford St., Cambridge, MA~02138, USA}
\email{wshen@math.harvard.edu}

\date{April 30, 2026}

\begin{document}

\maketitle

\textit{Abstract}: We introduce the package \texttt{EliminationTemplates} for the Macaulay2 computer algebra system, which provides tools for constructing automatic solvers for families of zero-dimensional radical ideals depending on algebraically independent parameters.
This article provides a self-contained description of how elimination templates are constructed for such families and their specialization properties.
Additionally, we describe the main functionality and datatypes provided by our package, and illustrate its usage on several examples, including applications from computer vision from which elimination templates originated.

\section{Introduction}\label{sec:introduction}

Finding the solutions to a system of polynomial equations is a common problem throughout mathematics and its various applications.
This problem may be very difficult in general.
However, it is quite common  in practice that these systems have a particular structure---typically, they depend on some number of input parameters, such that generic  parameter values produce polynomials generating a zero-dimensional ideal.

In Section~\ref{subsec:constructing-templates}, we recall the well-known fact that solving zero-dimensional polynomial systems can be reduced to solving a (non-symmetric) eigenvalue problem.
However, implementing this reduction in practice demands significant care, as it generally involves exact Gr\"{o}bner basis computation, whereas eigenvalue computation is inherently inexact.
Naively combining these two disparate computational paradigms may fail, for instance, due to (1) the discontinuity of Gr\"{o}bner bases to perturbations in polynomials' coefficients~\cite{DBLP:conf/aaecc/Mourrain99}, or by producing eigenvalue problems which may be (2) poorly-conditioned~\cite{DBLP:journals/jsc/MourrainTB21,DBLP:journals/ijcv/ByrodJA09,GRAF2026308} or (3) simply too large to be solved at all.
Failure (1), however, may be addressed by seeking reductions that are specific to the parametric family of interest; these may be stable to perturbations in the parameters of interest, rather than the coefficients they determine.
Failures of types (2)/(3) are generally unavoidable, but may be mitigated in practice by experimenting with different reduction strategies and identifying structural features of parametric families that enable solving more stably/efficiently than black-box approaches.

In this paper, we provide a self-contained introduction to the \emph{elimination template} approach to solving $0$-dimensional systems, and describe an implementation of this approach in the \texttt{EliminationTemplates} package for the computer algebra system \texttt{Macaulay2}~\cite{M2}.
Elimination templates are an incarnation of the Gr\"{o}bner trace method algorithm~\cite{traverso}, with extensive developments tailored towards applications in computer vision.
In these applications, elimination templates are used to construct \emph{minimal solvers} which repeatedly solve systems from a fixed parametric family for many different parameter values depending on data. The outputs of minimal solvers are later used to remove outlier data and initialize least-squares estimation.

To set the stage, we pose a simple minimal problem which can be solved using elimination templates.

\begin{example}\label{ex:intro-example}
Consider two lines in complex projective space, $L_1, L_2 \subset \mathbb{P}^3$, which are known to intersect. 
Our goal will be to recover a linear map $P:\mathbb{P}^3\to\mathbb{P}^2$ from $L_1$, $L_2$, and the images of these lines, $\ell_1 = P(L_1), \ell_2 = P(L_2) \subset \mathbb{P}^2.$
The map $P$ will model a camera with variable focal length rotating around the origin. Letting $f$ be the focal length of our camera lens, we may represent $P$ by a matrix of the form 
\begin{align*}
\begin{pmatrix}
f&0&0\\0&f&0\\0&0&1
\end{pmatrix}
\begin{pmatrix}
R\mid \textbf{0}
\end{pmatrix},
\end{align*}
where $R$ is a $3\times 3$ rotation matrix such that $RR^T=I_3$ and $\text{det}(R)=1$, i.e. $R\in \operatorname{SO}(3)$. Notice that the fourth column of the $3\times 4$ augmented matrix $(R\mid \textbf{0})$ is made up of $0$'s because the center of the camera is at the origin.\\
\indent Alternatively, $P$ may be described using a \emph{scaled quaternion parametrization}, which is the map
\begin{align}\label{eq:quaternion}
Q:\mathbb{P}^3& \to \mathbb{P} \left( \CC^{3\times 3} \right)    \\
[w:x:y:z] &\mapsto 
\begin{pmatrix}
w^2+x^2-y^2-z^2&2xy-2wz&2wy+2xz\\
2xy+2wz&w^2-x^2+y^2-z^2&-2wx+2yz\\
-2wy+2xz&2wx+2yz&w^2-x^2-y^2+z^2
\end{pmatrix} \nonumber 
\end{align}
Note that for a real-valued point in the domain $[w:x:y:z] \in \mathbb{P}_{\mathbb{R}}^3$, we may always assume homogeneous coordinates are normalized so that they satisfy 
\begin{equation}
    \label{eq:sphere-normalization} 
w^2+x^2+y^2+z^2=1.
\end{equation}
With this normalization, the matrix of~\eqref{eq:quaternion} lies in $\operatorname{SO}_{\mathbb{R}}(3)$. 
Conversely, for any real rotation $R\in \operatorname{SO}_{\mathbb{R}}(3)$, there exists a point in $[w:x:y:z] \in \mathbb{P}_{\mathbb{R}}^3$ with $Q([w:x:y:z]) = R.$ \\
\indent In Example~\ref{ex:quaternions}, we illustrate how to construct synthetic parameters $(L_1, L_2, \ell_1, \ell_2)$ and generate the elimination template of the corresponding zero-dimensional ideal, generated by $5$ equations (including~\eqref{eq:sphere-normalization}) in the $5$ unknowns $(f, w,x,y,z).$ 
We also show how the resulting template can be reused when presented with new parameters.
\end{example}

To date, software for generating elimination templates has been implemented in \texttt{Matlab}~\cite{Larsson,DBLP:journals/corr/abs-2004-11765} (using \texttt{Macaulay2} as a back-end) and \texttt{Maple}~\cite{greedy,DBLP:journals/corr/abs-2307-00320}.
Once an elimination template is determined, it is straightforward to derive a minimal solver in the implementer's language of choice.
Finally, we note the extensive literature~\cite{DBLP:conf/eccv/KukelovaBP08,DBLP:journals/ijcv/ByrodJA09,Larsson,greedy,DBLP:journals/corr/abs-2307-00320,DBLP:conf/eccv/LarssonA16,DBLP:conf/iccv/LarssonAO17} that might be used for future improvements of our package.

Section~\ref{sec:background} of this paper describes our approach to constructing elimination templates.
Our exposition largely follows the approach given in~\cite{greedy}, but is self-contained, relying only on  basic algebraic geometry at the level of~\cite{cox-using}.
Two novel aspects of our treatment are the construction of templates which are universal for generic linear forms and an emphasis on specialization properties for parametric systems.
In Section~\ref{sec:design}, we describe the main datatypes and functionality provided by our package.
Finally, in Section~\ref{sec:examples}, we illustrate the application of our package to Section~\ref{ex:intro-example} and related problems.

\section{Background on Elimination Templates}\label{sec:background}

\subsection{Constructing Elimination Templates}\label{subsec:constructing-templates}

Let $I\subset R := \KK[x_1,\ldots , x_n]$ be a zero-dimensional ideal over a field $\KK,$ so that the quotient ring $R/I$ is a finite-dimensional vector space over $\KK.$
When $I$ is radical, the vector space dimension $\dim_\KK (R/I)$ is equal to the number of points in the variety $V_{\overline{\KK}} (I),$ where $\overline{\KK} \supset \KK$ is an algebraic closure of $\KK.$
We will be largely interested in ideals defined over the field $\KK = \QQ$ of rational numbers, as well as points in the associated variety $V(I) $, whose coordinates belong to the fields $\mathbb{\overline{Q}} \subset \mathbb{C}.$ 
In practice, these coordinates are approximated in floating-point.
Two other important cases arise when $\KK$ is a function field generated by algebraically independent indeterminates (see Section~\ref{subsec:specializing}) and when $\KK$ is a finite field of prime characteristic (as illustrated in Example~\ref{ex:strategies}.)

Fix a monomial order $<$ on $R,$ so that $R/I$ has an ordered $\KK$-basis $\mathcal{B}$ of standard monomials, namely
\begin{equation}\label{eq:std-mon-basis}
\mathcal{B} = \{ x^{\alpha_1} , \ldots , x^{\alpha_d} \}, \quad x^{\alpha_i}  \notin \init_< (I) \quad   \forall i=1, \ldots , d, 
\quad 
\mathbf{0} = \alpha_d <  \cdots  < \alpha_2 < \alpha_1 .
\end{equation}
We denote the sorted vector of standard monomials by 
\begin{equation}\label{eq:vB}
v(\mathcal{B}) = \begin{pmatrix}
    x^{\alpha_1 } & \cdots & x^{\alpha_d} 
\end{pmatrix}^T
\in R^{d\times 1}.
\end{equation}
Multiplication by any element $f\in R$ determines a $\mathbb{K}$-linear transformation
\begin{align}
m_f : R/I &\to R/I\\
[g] &\mapsto [fg].
\end{align}
Fixing such an $f,$ we may write
\begin{align}
m_f ( [x^{\alpha_1}]) &= a_{11} [x^{\alpha_1}] + \ldots + a_{1d} [x^{\alpha_d}], \nonumber\\
\vdots \quad  \quad  & \quad \quad \quad \vdots \nonumber \\
m_f ( [x^{\alpha_d}]) &= a_{d1} [x^{\alpha_1}] + \ldots + a_{dd} [x^{\alpha_d}] \label{eq:act-f},
\end{align}
where $A= (a_{ij})\in \KK^{d\times d} $ is the \emph{action matrix} associated to the triple $(I, <,f).$ 

The following result is the basis of reducing $0$-dimensional solving to solving the eigenvalue problem.

\begin{theorem}\label{thm:eigenvector}(See e.g.~\cite[\S 2.4]{cox-using})
For any zero-dimensional radical ideal $I,$ the eigenvalues of the action matrix $A$ are precisely the values of $f$ on the points of $V (I).$
If $f$ takes $d= \dim_\KK (R/I)$ distinct values on these points, then every eigenvalue of $A$ has algebraic multiplicity $1,$ and every eigenvector of $A$ is a scalar multiple of $v(\mathcal{B})|_{x = P}$, the $d\times 1$ vector obtained by evaluating standard monomials at some point $P\in V (I).$
\end{theorem}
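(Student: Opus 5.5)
The plan is to read the defining relations \eqref{eq:act-f} as a single vector congruence modulo $I$, evaluate that congruence at the points of $V(I)$, and then use the Chinese remainder theorem to identify the spectrum. We work over $\overline{\KK}$ throughout. Neither the action matrix nor its characteristic polynomial changes under extending scalars, and $\mathcal{B}$ remains a basis of $\overline{\KK}[x]/I\overline{\KK}[x]$. Note the convention in \eqref{eq:act-f}: row $i$ of $A$ records the image of the $i$-th basis element, so $A$ is the \emph{transpose} of the usual matrix of $m_f$. We will therefore look for right eigenvectors of $A$ directly.

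\textbf{Step 1 (evaluation vectors are eigenvectors).} Row $i$ of \eqref{eq:act-f} says that
\[
\sum_j a_{ij}x^{\alpha_j} - f\,x^{\alpha_i} \in I .
\]
In vector form this is $A\,v(\mathcal{B}) - f\,v(\mathcal{B}) \in I^{d\times 1}$. Every element of $I$ vanishes at each $P\in V(I)$, so
\[
A\,v(\mathcal{B})|_{x=P} = f(P)\,v(\mathcal{B})|_{x=P}.
\]
The vector $v(\mathcal{B})|_{x=P}$ is nonzero, because its last entry is $x^{\alpha_d}=1$. Hence each $f(P)$ is an eigenvalue of $A$, with eigenvector $v(\mathcal{B})|_{x=P}$.

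\textbf{Step 2 (no other eigenvalues, with multiplicities).} Since $I$ is radical and zero-dimensional, $V(I)=\{P_1,\dots,P_d\}$ consists of exactly $d$ distinct points. By the Chinese remainder theorem, the evaluation map
\[
\overline{\KK}[x]/I \to \overline{\KK}^d, \qquad [g]\mapsto (g(P_1),\dots,g(P_d))
\]
is an isomorphism of $\overline{\KK}$-algebras. We take this as standard: the ideals $\mathfrak{m}_{P_k}$ are pairwise comaximal, and their intersection is $I$ by the Nullstellensatz together with radicality.

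Under this isomorphism, $m_f$ becomes coordinatewise multiplication by $(f(P_1),\dots,f(P_d))$, which is diagonal. Therefore
\[
\det(tI-A)=\det(t-m_f)=\prod_{k=1}^d\bigl(t-f(P_k)\bigr),
\]
using that transposition does not change the characteristic polynomial. So the eigenvalues of $A$, counted with algebraic multiplicity, are exactly the values $f(P_k)$.

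\textbf{Step 3 (distinct values).} Suppose the $f(P_k)$ are pairwise distinct. Then by Step 2 the characteristic polynomial has $d$ simple roots, so every eigenvalue has algebraic multiplicity $1$. Each eigenspace is then one-dimensional, and by Step 1 the eigenspace for $f(P_k)$ contains the nonzero vector $v(\mathcal{B})|_{x=P_k}$. Hence every eigenvector is a scalar multiple of one of these evaluation vectors.

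The only step needing any real input is Step 2. Its content is the Chinese remainder isomorphism, and in particular the requirement that $\dim_\KK(R/I)$ equals the number of points. That is exactly where radicality is used, and we plan to cite it from \cite[\S 2.4]{cox-using} rather than reprove it. Steps 1 and 3 are formal once the transpose convention is handled correctly.
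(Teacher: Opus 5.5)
Your proof is correct. The paper gives no proof of its own for this statement; it only defers to \cite[\S 2.4]{cox-using}. The standard argument there pins down the spectrum with the Nullstellensatz rather than the Chinese remainder theorem, so your Step 2 takes a genuinely different route.

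In that argument, suppose $\lambda\neq f(P)$ for every $P\in V(I)$. Then $I+\langle f-\lambda\rangle$ has no zeros, so $1=g\,(f-\lambda)+h$ with $h\in I$. Hence $m_{f-\lambda}$ is invertible with inverse $m_g$, and $\lambda$ is not an eigenvalue. The reverse inclusion and the eigenvector description come from evaluating at points of $V(I)$, as in your Step 1; with their column convention these appear as left eigenvectors, which is exactly the transpose issue you flagged.

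The two routes buy different things. The Nullstellensatz argument needs neither radicality nor a point count, so it proves the first claim for every zero-dimensional $I$. Your isomorphism $\overline{\KK}[x]/I\cong\overline{\KK}^d$ uses radicality essentially. In return it diagonalizes $m_f$ for every $f$ at once and gives the characteristic polynomial $\prod_k\bigl(t-f(P_k)\bigr)$ with multiplicities.

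For the multiplicity and eigenvector claims you do not actually need Step 2. If the $d$ values $f(P_k)$ are distinct, Step 1 already exhibits $d$ distinct eigenvalues of the $d\times d$ matrix $A$. These exhaust the spectrum and are all simple.

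One caveat concerns the input $\#V(I)=d$ in Step 2. It really requires $I\overline{\KK}[x]$ to be radical, not just $I$. This follows from radicality of $I$ when $\KK$ is perfect, which covers every field the paper uses ($\QQ$, $\CC(p_1,\dots,p_m)$, finite fields), and the paper itself asserts it at the start of Section~\ref{subsec:constructing-templates}. It fails over imperfect fields: for example, $\langle x^p-t\rangle\subset\mathbb{F}_p(t)[x]$ is prime, yet it has a single point over $\overline{\mathbb{F}_p(t)}$. In that setting the Nullstellensatz route still proves the first claim, while your Step 2 does not apply.
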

Gr\"{o}bner basis algorithms are typically needed to determine the quotient ring basis $\mathcal{B}$ and action matrix $A$.
The idea behind \emph{elimination templates} is this: for fixed $<$ and $f\in R$ (typically a general linear form), and two ideals $I,J\subset R$ of a ``similar structure" (see Section~\ref{subsec:specializing} for a precise statement), the full cost of computing a Gr\"{o}bner basis need only be incurred once for the ideal $I,$ after which the action matrix of $J$ may be determined using just linear algebra over the field $k$.

Let us assume that $f\in R$ is a general linear form so that, in particular, $f$ will have the point separation property required by Theorem~\ref{thm:CLOspecializationGrobner}.
Our construction of an elimination template adjoins a new variable $s$ to $R.$
By abuse of notation, we identify elements of $R$ with their images under the inclusion map $\iota : R\hookrightarrow R[s]$.
We equip $R[s]$ with the \emph{elimination order} $<_s$ defined by
\begin{equation}
s^a \mathbf{x}^{\mathbf{\alpha}} <_s s^b \mathbf{x}^{\mathbf{\beta}}
\quad 
\text{ if } \quad 
\begin{cases}
    a<b, &\quad \text{ or }\\
    a=b &\quad \text{ and }  \quad \mathbf{x}^{\mathbf{\alpha}} < \mathbf{x}^{\mathbf{\beta}}.
\end{cases}
\end{equation}
Finally, we define the \emph{graph ideal} associated to $I$ and $f,$
\begin{equation}\label{eq:extended-ideal}
I_{f\to s} :=I + \langle s-f\rangle \subset R[s].
\end{equation}
Passing from $I$ to the graph ideal $I_{f \to s}$ preserves the action matrix in a very precise sense.
\begin{proposition}\label{prop:elim-order}
The triples $(I, <, f)$ and $(I_{f \to s}, <_s , s)$ have the same action matrix. 
\end{proposition}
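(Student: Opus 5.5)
The approach is to exhibit a ring isomorphism $R/I \cong R[s]/I_{f\to s}$ and show that it carries the standard monomial basis $\mathcal{B}$ of $(I,<)$ onto the standard monomial basis of $(I_{f\to s},<_s)$, in the same order, while intertwining $m_f$ with $m_s$. Equality of the action matrices then follows from the definition~\eqref{eq:act-f}.

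First, the inclusion $\iota: R \hookrightarrow R[s]$ induces a $\KK$-algebra map $\bar\iota : R/I \to R[s]/I_{f\to s}$. It is surjective because $s \equiv f$ modulo $I_{f\to s}$, so every class has a representative in $R$. To see that it is injective, consider the substitution homomorphism $\phi: R[s]\to R$ given by $s\mapsto f$. It is the identity on $R$, and it sends $I_{f\to s}$ into $I$, since $\phi(s-f)=0$. Hence it induces a map $R[s]/I_{f\to s}\to R/I$ that is inverse to $\bar\iota$. Because $\bar\iota([f]) = [s]$ and $\bar\iota$ is a ring map, we get $\bar\iota\circ m_f = m_s\circ \bar\iota$.

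Second, and this is the step needing care, I must identify the standard monomials of $I_{f\to s}$ under $<_s$. Let $G$ be a Gröbner basis of $I$ with respect to $<$. I claim $G\cup\{s-f\}$ is a Gröbner basis of $I_{f\to s}$ with respect to $<_s$.
\begin{itemize}
\item Since $<_s$ is an elimination order and $f\in R$, we have $\init_{<_s}(s-f)=s$. For $g\in G\subset R$, $\init_{<_s}(g)=\init_<(g)$.
\item The leading monomial $s$ is coprime to every $\init_<(g)$, so Buchberger's product criterion disposes of all mixed S-pairs. The S-pairs within $G$ reduce to zero because $G$ is already a Gröbner basis.
\end{itemize}
Alternatively, one can argue directly. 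Take $h\in I_{f\to s}$ and reduce it modulo $s-f$ to get $\phi(h)\in I$. If $\init_{<_s}(h)$ involves $s$, it lies in $\langle s\rangle$. Otherwise $h\in R$, so $h=\phi(h)\in I$. Either way, $\init_{<_s}(I_{f\to s}) = \init_<(I)R[s] + \langle s\rangle$.

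Consequently, the standard monomials of $I_{f\to s}$ are exactly the $s$-free monomials $x^{\alpha}\notin \init_<(I)$, which is $\mathcal{B}$ itself. Their ordering is unchanged, because $<_s$ restricts to $<$ on $s$-free monomials. Moreover, $\bar\iota$ sends $[x^{\alpha_i}]$ to $[x^{\alpha_i}]$.

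Finally, I apply $\bar\iota$ to each equation in~\eqref{eq:act-f}. Using $\bar\iota\circ m_f = m_s\circ\bar\iota$, this gives $m_s([x^{\alpha_i}])=\sum_j a_{ij}[x^{\alpha_j}]$ in $R[s]/I_{f\to s}$ with the same coefficients $a_{ij}$. So the action matrix of $(I_{f\to s},<_s,s)$ equals $A$. I expect the main obstacle to be the Gröbner basis and initial ideal identification in the second step; the rest is formal.
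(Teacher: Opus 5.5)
Your proof is correct and takes the same approach as the paper. The paper's two-line argument is that $\mathcal{B}$ remains the standard monomial basis of $R[s]/I_{f\to s}$ under $<_s$ and that the relations~\eqref{eq:act-f} lift to give the action of $m_s$. You supply the details the paper leaves implicit: that $G\cup\{s-f\}$ is a Gr\"obner basis via the product criterion (equivalently, $\init_{<_s}(I_{f\to s})=\init_<(I)R[s]+\langle s\rangle$), and that $R/I\cong R[s]/I_{f\to s}$ via an isomorphism intertwining $m_f$ with $m_s$.
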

\begin{proof}
By construction, the basis of standard monomials~\eqref{eq:std-mon-basis} of $R/I$ is also a basis of standard monomials of  $R[s]/I_{f\to s}$, and the linear relations~\eqref{eq:act-f} lift to express the action of $m_s$ on this basis.
\end{proof}

Now, suppose the ideal $I= \langle f_1, \ldots , f_r \rangle $ is given in terms of a fixed generating set $F = \{ f_1, \ldots , f_r \}$, and that $I$ has a reduced Gr\"{o}bner basis $G = \{ g_1, \ldots , g_t \}$ with respect to $<.$
The vectors
\begin{equation}\label{eq:vF-vG}
v(F) := \begin{pmatrix}
    f_1 & \cdots 
&     f_r 
\end{pmatrix}^T \in R^{r\times 1},
\quad 
v(G) := \begin{pmatrix}
    g_1 & \cdots 
&     g_t 
\end{pmatrix}^T \in R^{t\times 1}
\end{equation}
may be related by a \emph{change matrix},
\begin{equation}
H_{F\to G} \in R^{t \times r} \quad \text{s.t.} \quad 
v(G) = H_{F\to G} \cdot v(F).
\end{equation}
From the basis of standard monomials~\eqref{eq:std-mon-basis} for $I$ with respect to $<,$ we may consider the following \emph{vector of action relations} in the ring $R$,
\begin{equation}\label{eq:V-vector}
v(A, R) := 
f \cdot v(\mathcal{B})
-
A
\cdot 
v(\mathcal{B}).
\end{equation}
By construction, each entry of $v(A, R)$ belongs to the ideal $I$.
Thus, using multivariate polynomial division, there exists another polynomial matrix
\begin{equation}
    H_{G \to A} \in R^{d\times t} \quad 
    \text{s.t.} \quad 
    v(A, R) = H_{G \to A } \cdot 
   v(G).
\end{equation}
Now, we set
\begin{equation}
H_0 = H_{G \to A} \cdot H_{F\to G }
\in R^{d \times r}.\label{EqH_{FtoA}}
\end{equation}
Working in the extended polynomial ring $R[s],$ we may write the action relations as follows: 
\begin{equation}\label{eq:H0matrix-Rs}
s \cdot v(\mathcal{B})
-
A
\cdot 
v(\mathcal{B})
=
(s-f) v(\mathcal{B}) 
+ v(A,R)
= (s-f) v(\mathcal{B}) + H_0 \cdot v(F) . 
\end{equation}
Equation~\eqref{eq:H0matrix-Rs} shows that the action of $A$ on standard monomials can be recovered by multivariate polynomial arithmetic, provided that the Gr\"{o}bner basis $G$ has already been computed. 
The construction of an elimination template recasts this multivariate polynomial computation as row-reduction of an associated Macaulay-type matrix whose entries lie in the ground field $\KK.$

\begin{remark}
    For an ideal $J$ ``similar'' to $I$, given the Macaulay-type matrix $H_0$ for $I$, the action matrix for $J$ can be computed without any Gr\"obner basis computation for $J$. See Section \ref{subsec:specializing} for details.
\end{remark}
\begin{remark}\label{remark:H}
Observe that equation~\eqref{eq:H0matrix-Rs} still holds if we replace $H_0$ with any matrix $H\in R^{d\times r}$ of the form $H = H_0 +\Theta \cdot H_1,$ where $\Theta \in R^{d\times p}$ and the rows of $H_1\in R^{p\times r}$ generate the first syzygy module of $F$:
\begin{equation}\label{eq:Hmatrix-Rs}
s \cdot v(\mathcal{B})
-
A
\cdot 
v(\mathcal{B})
=
 (s-f) v(\mathcal{B}) + H \cdot v(F). 
\end{equation}
This observation is the basis of strategies for reducing the size of an elimination template---see Section \ref{subsec:strategies}.
\end{remark}

For any matrix $H \in R^{d \times r}$ and each $i=1, \ldots , r$ we let $\mathcal{S}_i(H)$ denote the \emph{$i$-th shift set}, consisting of all monomials appearing in some entry of the $i$-th column of $H$. 
We denote the union of all shift monomials by
\begin{equation}
    \mathcal{S}(H) := \mathcal{S}_1 (H)\displaystyle\cup \ldots \displaystyle\cup \mathcal{S}_{r} (H).\label{EqShiftSet}
\end{equation}

Now, let $\mathcal{M} (H)$ denote the set of all monomials appearing in either $\mathcal{B}$, $s \cdot \mathcal{B},$ or in the support of some shifted generator of the form $x^{\beta_{ij}} f_i$ with $x^{\beta_{ij}} \in \mathcal{S}_i (H).$ 
Let us partition $\mathcal{M}( H)$ into three sets,
\begin{align}
\mathcal{B} (H) &:=  \mathcal{B}, \tag{the \emph{basic} monomials}\\
\mathcal{R} (H) &:= s \cdot \mathcal{B} , \tag{the \emph{reducible} monomials}\\
\mathcal{E} (H) &:= \mathcal{M} (H) \setminus \left( \mathcal{B} (H) \cup \mathcal{R} (H)\right) . \tag{the \emph{excessive} monomials}
\end{align}
We may form the following Macaulay-type matrix
\begin{equation}\label{eq:template-matrix-blocks}
\renewcommand{\arraystretch}{1.2}
\begin{blockarray}{cccc}
  & \mathcal{E}(H) & \mathcal{R}(H) & \mathcal{B}(H) \\
  \begin{block}{c(c|c|c)}
    x^{\beta_{11}} f_1 & \ast & 0 & \ast \\
    \vdots & \vdots & \vdots & \vdots \\
    x^{\beta_{r1}} f_r & \ast & 0 & \ast \\
    \vdots & \vdots & \vdots & \vdots \\
    (s-f) v(\mathcal{B}) & \boldsymbol{\ast} & \mathbf{I}_d & \boldsymbol{\ast} \\
  \end{block}
\end{blockarray}
\in \KK^{(d + \# \mathcal{S} (H)) \times (2d + \#\mathcal{E} (H))}.
\end{equation}
Now, using the polynomial identity~\eqref{eq:Hmatrix-Rs}, we may express the action relations as linear combinations of the rows of this matrix, which thus has a row-echelon form given block-wise by
\begin{equation}\label{eq:rref-block-template}
\left(
\begin{array}{c|c|c}
    \boldsymbol{\ast}  & \mathbf{0} & \phantom{-]}\boldsymbol{\ast}   \\
     \mathbf{0} & \mathbf{I}_d & -A  
\end{array}
\right).
\end{equation}
We see that, in order to recover the action matrix $A$ with row operations over $\KK,$ the columns corresponding to the reducible monomials $\mathcal{R} (H)$ may harmlessly be dropped.
\begin{definition}\label{def:e-template}
Let $I=\langle f_1, \ldots , f_s \rangle$ be a zero-dimensional ideal in the polynomial ring $R$, with $d\times d$ action matrix $A$ for some monomial order $<,$ and $H\in R^{s\times d}$ any matrix satisfying~\eqref{eq:Hmatrix-Rs}.
We define the $(\# \mathcal{S}(H) + d ) \times (\# \mathcal{E} (H) + d) $ \emph{template matrix} $M$ whose first $\# \mathcal{S} (H)$ rows are the coefficient vectors of each $x^{\beta_{ij}} f_i$ for each $x^{\beta_{ij}}\in \mathcal{S}_i$, and whose last $d$ rows are the the coefficient vectors of $(s-f) v(\mathcal{B}).$ 
The columns of $M$ are divided into consecutive blocks $\mathcal{E} (H)$ and $\mathcal{B} (H),$ each in decreasing order with respect to $<.$
Pictorially,
\begin{equation}\label{eq:template-matrix-blocks2}
M =
\renewcommand{\arraystretch}{1.2}
\begin{blockarray}{ccc} 
  & \mathcal{E}(H) & \mathcal{B}(H) \\
  \begin{block}{c(c|c)}
    x^{\beta_{11}} f_1 & \ast & \ast \\
    \vdots & \vdots & \vdots \\
    x^{\beta_{s1}} f_s & \ast & \ast \\
    \vdots & \vdots & \vdots \\
    (s-f) v(\mathcal{B}) & \boldsymbol{\ast} & \boldsymbol{\ast} \\
  \end{block}
\end{blockarray}
\in \KK^{(d + \# \mathcal{S} (H)) \times (d + \#\mathcal{E} (H))}.
\end{equation}

\end{definition}

Let us emphasize that both the action matrix $A$ and the template matrix $M$ associated to an ideal $I$ are determined by several choices: namely, the input generators $F,$ the monomial order $<,$ and the matrix $H$ in~\eqref{eq:Hmatrix-Rs}.
These choices may have significant influence on a solver in practice, as they effect both the runtime of row-reducing the template matrix $M$ and the conditioning of the eigenvalue problem for $A.$

\subsection{Specializing Elimination Templates}\label{subsec:specializing}

Suppose now that our ground field is the complex function field $\KK = \CC (p_1, \ldots , p_m)$. 
We think of $p=(p_1, \ldots , p_m)$ as parameters which specify family of ideals.
We would like to formalize the intuition that elimination templates for zero-dimensional ideals $I \subset \KK[x_1, \ldots , x_n]$ are preserved under specializations $\Psi_{P} : \KK[x_1,\ldots , x_n] \to \CC [x_1, \ldots , x_n]$ for generic $P \in \CC^m .$
To do this, suppose that $I$ is the extension of an ideal $\tilde{I} \subset S := \CC [x_1,\ldots , x_n, p_1, \ldots , p_m]$ under the inclusion map

\begin{equation} \label{eq:inclusion} 
\iota : S \hookrightarrow \KK[x_1, \ldots , x_n].\end{equation}

Let us furthermore assume that the parameters $p$ are independent in the sense that 

\begin{equation}\label{eq:param-independent}
\tilde{I}\cap \CC[p_1,\dots,p_m]=\{0\}.
\end{equation}

Under these conditions, Gr\"{o}bner bases for a particular class of monomial orders have well-known specialization properties.
We quote the following result from~\cite[Ch.~6, \S 3, Proposition 1]{clo-4e}.

\begin{theorem}\label{thm:CLOspecializationGrobner}
    Let $\tilde{I}$ be an ideal in $S$ such that~\eqref{eq:param-independent} holds. Fix an elimination order $<_p$ such that $x^{\alpha}<_px^{\beta}$ implies $x^{\alpha}p^{\gamma}<_px^{\beta}$ for all $\gamma$. Let $\tilde{G}=\{g_1,\dots,g_t\}$ be a reduced Gr\"obner basis of $\tilde{I}$.

    For $i=1,\dots,t$, write $g_i\in \tilde{G}$ in the form $$g_i=h_i(p_1,\dots,p_m)x^{\alpha_i}+\text{ (terms}<_p x^{\alpha_i}),$$ where $h_i\in \CC[p_1,\dots,p_m]$ is nonzero. Then, for all $P=(a_1,\dots,a_m)\in \CC^m\setminus V(h_1,\dots,h_t)$, the set $$G=\{g_1(x_1,\dots,x_n,a_1,\dots,a_m),\dots,g_t(x_1,\dots,x_n,a_1,\dots,a_m)\}$$ is a Gr\"obner basis of $\Psi_{P}(I)$ with respect to the order $<$ on $\CC[x_1,\dots,x_n]$ obtained by restricting $<_p$. 
\end{theorem}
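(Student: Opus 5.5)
We argue through Buchberger's criterion: a finite generating set of an ideal is a Gröbner basis exactly when every $S$-polynomial of a pair reduces to zero under division by the set. The plan has three steps. First, check that the specialized polynomials still generate $\Psi_P(I)$ and keep their leading terms. Second, check that every $S$-pair of the specialized set reduces to zero. Third, apply the criterion.

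For the first step, since $\tilde{G}$ generates $\tilde{I}$, each generator of $\tilde I$ is an $S$-combination of the $g_i$. Applying the evaluation homomorphism $p\mapsto P$ shows that $G$ generates the specialized ideal $\Psi_P(I)=\tilde I|_{p=P}$. Here we identify $\Psi_P(I)$ with the image of $\tilde I$; this identification is the intended meaning in the statement. Because $h_i(P)\neq 0$, the leading term of $g_i(x,P)$ with respect to $<$ is $h_i(P)x^{\alpha_i}$. This holds because the property of $<_p$ assumed in the statement makes "terms $<_p x^{\alpha_i}$" specialize to $x$-monomials strictly smaller than $x^{\alpha_i}$.

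For the second step, fix a pair $g_i,g_j$. Let $x^{\gamma}=\operatorname{lcm}(x^{\alpha_i},x^{\alpha_j})$ and form the "parametric $S$-polynomial"
\[
S_{ij}=h_j\,\frac{x^{\gamma}}{x^{\alpha_i}}\,g_i-h_i\,\frac{x^{\gamma}}{x^{\alpha_j}}\,g_j\in\tilde I .
\]
This polynomial has no terms involving $x^\gamma$ and lies in $\tilde I$, so it reduces to $0$ modulo $\tilde G$ in $S$ with respect to $<_p$. That gives a standard representation $S_{ij}=\sum_k q_k g_k$ in which each $\operatorname{LT}_{<_p}(q_kg_k)\le_p\operatorname{LT}_{<_p}(S_{ij})$.

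Specialize this representation at $P$. Up to the nonzero scalar $h_i(P)h_j(P)$, the left side becomes the usual $S$-polynomial of $g_i(x,P),g_j(x,P)$. Every term of $S_{ij}$ has $x$-part strictly below $x^\gamma$, so each $q_k(x,P)g_k(x,P)$ has $x$-degree (in $<$) strictly below $x^\gamma$. The property of $<_p$ is what lets us compare the $x$-parts after evaluation; the $x$-part is not increased, since the leading term of $g_k$ survives. This produces an lcm-representation of each specialized $S$-polynomial in which all summands have leading term $<x^\gamma$. By the refined Buchberger criterion (see \cite{cox-using}), such representations for all pairs imply that $G$ is a Gröbner basis.

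The main obstacle is this degree bookkeeping in the second step. The $<_p$-leading term of $q_kg_k$ is $\operatorname{LT}(q_k)\cdot h_kx^{\alpha_k}$, with $x$-part $\mathrm{lm}_x(q_k)x^{\alpha_k}$. We must argue that this $x$-part is bounded by the $x$-part of $\operatorname{LT}_{<_p}(S_{ij})$, which is $<x^\gamma$. After specialization some top coefficients of $q_k$ may vanish, but that can only lower degrees. So the bound persists, and we appeal to the "lcm-representation" form of Buchberger's criterion rather than to literal reduction to zero.
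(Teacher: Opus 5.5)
Your proof is correct. The paper does not prove this statement; it only quotes it from \cite[Ch.~6, \S 3, Proposition~1]{clo-4e}, so there is no competing argument in the text. What you give is the standard self-contained proof, and it supplies the argument the paper outsources. You form the parametric $S$-polynomials $h_j x^{\gamma-\alpha_i}g_i-h_i x^{\gamma-\alpha_j}g_j\in\tilde I$ and take standard representations with respect to $<_p$. The key observation is that $<_p$ compares $x$-parts first. Hence every $x$-monomial of $q_kg_k$ is at most the $x$-part of $\operatorname{LM}_{<_p}(S_{ij})$, which is $<x^\gamma$. Evaluation at $P$ only deletes terms, and dividing by $h_i(P)h_j(P)$ gives an lcm-representation of the specialized $S$-polynomial.

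There is one small slip. $\operatorname{LT}_{<_p}(q_kg_k)=\operatorname{LT}_{<_p}(q_k)\operatorname{LT}_{<_p}(h_k)x^{\alpha_k}$, not $\operatorname{LT}(q_k)h_kx^{\alpha_k}$. Only its $x$-part enters your argument, so nothing breaks.

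Compared with the bare citation, your argument shows that neither reducedness of $\tilde G$ nor \eqref{eq:param-independent} is actually used; the latter merely rules out $g_i\in\CC[p]$. Your identification of $\Psi_P(I)$ with the image of $\tilde I$ is the sensible reading, since $\Psi_P$ is not defined on all of $\KK[x]$.

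One thing you should state explicitly: you use $h_i(P)\neq 0$ for \emph{every} $i$, i.e.\ $P\notin V(h_1h_2\cdots h_t)$. The statement as printed says $P\notin V(h_1,\dots,h_t)$, which only forces \emph{some} $h_i(P)\neq 0$. Your reading is the right one, and the literal one is false. Take $\tilde I=\langle x_1^2-1,\ p_1x_2-x_1\rangle\subset\CC[x_1,x_2,p_1]$ with lex order $x_2>x_1>p_1$.
\begin{itemize}
\item The leading monomials $x_1^2$ and $p_1x_2$ are coprime, so the two generators form a reduced Gr\"obner basis.
\item Hence $\tilde I\cap\CC[p_1]=0$.
\item $h_1=1$ and $h_2=p_1$, so $V(h_1,h_2)=\emptyset$ and the literal hypothesis holds for every $P$.
\item At $P=0$ the specialized set $\{x_1^2-1,\,-x_1\}$ generates $\langle 1\rangle$, while its leading monomials generate only $\langle x_1\rangle$. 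So it is not a Gr\"obner basis.
\end{itemize}
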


Now, let $M$ be a template matrix for $I \subset \KK [x_1, \ldots , x_n],$ and $A$ the corresponding action matrix.
Let us assume a fixed row-echelon form for the template matrix $M$ of the form
\begin{equation}\label{eq:rref-template}
E_1 (p) \cdots E_\ell (p) \cdot M =
\left(
\begin{array}{c|c}
    \boldsymbol{\ast}  & \phantom{-}\boldsymbol{\ast}   \\
     \mathbf{0} &  -A  
\end{array}
\right)
,
\end{equation}
where $E_1 (p), \ldots , E_\ell (p) \in \operatorname{GL} (d + \# \mathcal{S} (H), \, \KK)$ are elementary matrices.
We make the following observations:
\begin{enumerate}
    \item For $1\le i \le \ell ,$ each of the conditions $\det (E_i (p)) \ne 0$ defines a nonempty Zariski-open subset of $\CC^m.$ Let $\mathcal{U}_1  \subset \CC^m$ denote the intersection of all such sets, so that specialization at generic $P\in \mathcal{U}_1$ preserves the row echelon form~\eqref{eq:rref-template}.
    \item Theorem~\ref{thm:CLOspecializationGrobner} defines a nonempty Zariski-open set $\mathcal{U}_2 \subset \CC^m$ for which Gr\"{o}bner bases specialize; in particular, this shows that the specialization of the action matrix for $I$ remains an action matrix for the specialized ideal $\Psi_P (I)$.
\end{enumerate}
Thus, setting $ \mathcal{U} = \mathcal{U}_1 \cap \mathcal{U}_2,$ we obtain the following conclusion.

\begin{theorem}\label{thm:specialization}
For $\KK = \mathbb{C} (p_1, \ldots , p_m)$ and $I \subset \KK[x_1, \ldots , x_n]$ any zero-dimensional ideal obtained as the extension of $\tilde{I}\subset S$ such that~\eqref{eq:param-independent} holds, 
and for any monomial order $<$ 
there exists a dense Zariski-open subset $\mathcal{U} \subset \mathbb{C}^m$ such that the elimination template for $I$ constructed in Definition~\ref{def:e-template} specializes to an elimination template for the specialized ideal $\Psi_P (I)$ for all $P \in \mathcal{U}.$ 
\end{theorem}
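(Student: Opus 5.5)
The plan is to make the two observations preceding the statement rigorous and then intersect the resulting open sets.

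First, I fix the data defining the template over $\KK$: the generators $F$, the order $<$, the standard monomial basis $\mathcal{B}$, the matrix $H$ satisfying~\eqref{eq:Hmatrix-Rs}, and the elementary matrices $E_1(p),\dots,E_\ell(p)$ realizing the row-echelon form~\eqref{eq:rref-template}. Every entry of $F$, $H$, $A$, $M$ and each $E_i(p)$ is a rational function in $p$. Let $D\in\CC[p]$ be a common denominator of all of them, taken nonzero. Then each $\det E_i(p)$, being a nonzero element of $\KK$, has a numerator $N_i$, and I set $\mathcal{U}_1=\CC^m\setminus V\bigl(D\prod_i N_i\bigr)$. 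This set is nonempty, Zariski-open and dense because $\CC^m$ is irreducible. For $P\in\mathcal{U}_1$, specialization $\Psi_P$ is a ring homomorphism on the localization $\CC[p]_{D}$. It therefore commutes with products, so $\Psi_P(E_1)\cdots\Psi_P(E_\ell)\Psi_P(M)$ is the specialized block form. Also, $\Psi_P(M)$ is exactly the template matrix built from $\Psi_P(F)$, the same shift sets, and the same monomial columns, with coefficients that are merely specialized. Some supports might shrink, but the columns can be kept. Likewise, specializing the polynomial identity~\eqref{eq:Hmatrix-Rs} gives the same identity for $\Psi_P(I)$, with $\Psi_P(A)$ and $\Psi_P(H)$.

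Second, I must show that $\Psi_P(A)$ is genuinely the action matrix of $\Psi_P(I)$ with respect to the restricted order. This is the main obstacle, since Theorem~\ref{thm:CLOspecializationGrobner} requires an elimination order $<_p$ on $S$ extending $<$, which I define with $x$-monomials compared first and ties broken by any order on $p$. The reduced Gröbner basis $G$ of $I$ over $\KK$ is obtained from the reduced Gröbner basis $\tilde G$ of $\tilde I$ by dividing by leading coefficients; this uses~\eqref{eq:param-independent}, which guarantees that the $h_i$ are nonzero. I then let $\mathcal{U}_2=\CC^m\setminus V(h_1\cdots h_t)$, which is dense and open. For $P\in\mathcal{U}_2$, the specialized $\tilde G$ is a Gröbner basis of $\Psi_P(I)$ with the same leading monomials, so $\mathcal{B}$ is still the standard monomial basis of $\CC[x]/\Psi_P(I)$ and $d$ is preserved. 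One subtlety is that $\Psi_P(I)$ must mean the ideal generated by specializing $\tilde I$, not by specializing $F$. On a further dense open set, $\Psi_P(F)$ generates it: each element of $\tilde G$ is a $\KK$-combination of $F$, and clearing the finitely many denominators of these combinations gives one more nonvanishing condition.

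Finally, take $\mathcal{U}$ to be the intersection of $\mathcal{U}_1$, $\mathcal{U}_2$, and these extra open sets. For $P\in\mathcal{U}$, the relations $s\,v(\mathcal{B})-\Psi_P(A)v(\mathcal{B})\in\Psi_P(I)_{f\to s}$ hold, and because $\mathcal{B}$ is a basis, these relations determine the action matrix uniquely. Hence $\Psi_P(A)$ is the action matrix of $\Psi_P(I)$. By the first step, it is also recovered from $\Psi_P(M)$ by the specialized row operations. So $\Psi_P(M)$ is an elimination template for $\Psi_P(I)$ in the sense of Definition~\ref{def:e-template}, which proves Theorem~\ref{thm:specialization}.
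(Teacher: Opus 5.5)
Your proposal is correct and is essentially the paper's argument: you intersect the open set where the $\det E_i(p)$ are nonzero (and, as you rightly add, where all denominators of $M$, $H$, $A$, $E_i$ are nonzero) with the set $\CC^m\setminus V(h_1\cdots h_t)$ from Theorem~\ref{thm:CLOspecializationGrobner}, and your handling of $\Psi_P(F)$ generating the specialized ideal and of the uniqueness of the action matrix fills in details the paper leaves implicit. One small slip is that the normalized $\tilde G$ is always a Gr\"{o}bner basis of $I$ over $\KK$ but need not be the reduced one (for $\tilde I=\langle p_1x_1,x_1^2\rangle$ it gives $\{x_1,x_1^2\}$), and $h_i\neq 0$ is automatic rather than a consequence of~\eqref{eq:param-independent}; since you only use $\init_<(I)=\langle x^{\alpha_1},\dots,x^{\alpha_t}\rangle$, nothing breaks.
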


\begin{example}
To demonstrate Theorem~\ref{thm:specialization}, we consider the parametric ideal $$I=(x^2+ay^2-1,xy-b)\in\CC(a,b)[x,y].$$ 

The code below illustrates an echelon form~\eqref{eq:rref-template} involving parameters $a$ and $b.$

\begin{lstlisting}[language=macaulay2]
FF = frac(QQ[a,b,c,d]);
R = FF[x, y, MonomialOrder => Lex];
l = x - 2*y;
I = ideal(x^2 + a*y^2 - 1, x*y - b);
needsPackage "EliminationTemplates";
ET = eliminationTemplate(l, I);
M = getTemplateMatrix ET
(P, L, U) = LUdecomposition M;
\end{lstlisting}
We may inspect the template matrix and its $LU$ factors as follows:

\begin{lstlisting}[language=macaulay2]
i7 : M
o7 = {-4} | 1 0 0  0  0  0  a 0 0 0 0 0 -1 0  0  |
     {-3} | 0 1 0  0  0  0  0 0 0 0 0 a 0  -1 0  |
     {-4} | 1 0 0  0  -b 0  0 0 0 0 0 0 0  0  0  |
     {-3} | 0 1 0  0  0  -b 0 0 0 0 0 0 0  0  0  |
     {-4} | 0 0 1  0  0  0  0 0 0 0 0 0 -b 0  0  |
     {-3} | 0 0 0  1  0  0  0 0 0 0 0 0 0  -b 0  |
     {-2} | 0 0 0  0  1  0  0 0 0 0 0 0 0  0  -b |
     {-3} | 0 0 -1 0  0  0  2 1 0 0 0 0 0  0  0  |
     {-2} | 0 0 0  -1 0  0  0 0 1 0 0 2 0  0  0  |
     {-1} | 0 0 0  0  -1 0  0 0 0 1 0 0 2  0  0  |
     {0}  | 0 0 0  0  0  -1 0 0 0 0 1 0 0  2  0  |

              11       15
o7 : Matrix FF   <-- FF

i8 : L
o8 = | 1 0 0  0  0      0   0       0 0 0 0 |
     | 0 1 0  0  0      0   0       0 0 0 0 |
     | 0 0 1  0  0      0   0       0 0 0 0 |
     | 0 0 0  1  0      0   0       0 0 0 0 |
     | 1 0 0  0  1      0   0       0 0 0 0 |
     | 0 1 0  0  0      1   0       0 0 0 0 |
     | 0 0 0  0  (-1)/b 0   1       0 0 0 0 |
     | 0 0 -1 0  0      0   (-2b)/a 1 0 0 0 |
     | 0 0 0  -1 0      0   0       0 1 0 0 |
     | 0 0 0  0  1/b    0   -1      0 0 1 0 |
     | 0 0 0  0  0      1/b 0       0 0 0 1 |

              11       11
o8 : Matrix FF   <-- FF

i9 : U
o9 =  | 1 0 0 0 0  0  a      0 0 0 0 0   -1        0        0        |
      | 0 1 0 0 0  0  0      0 0 0 0 a   0         -1       0        |
      | 0 0 1 0 0  0  0      0 0 0 0 0   -b        0        0        |
      | 0 0 0 1 0  0  0      0 0 0 0 0   0         -b       0        |
      | 0 0 0 0 -b 0  -a     0 0 0 0 0   1         0        0        |
      | 0 0 0 0 0  -b 0      0 0 0 0 -a  0         1        0        |
      | 0 0 0 0 0  0  (-a)/b 0 0 0 0 0   1/b       0        -b       |
      | 0 0 0 0 0  0  0      1 0 0 0 0   (-ab+2)/a 0        (-2b2)/a |
      | 0 0 0 0 0  0  0      0 1 0 0 2   0         -b       0        |
      | 0 0 0 0 0  0  0      0 0 1 0 0   2         0        -b       |
      | 0 0 0 0 0  0  0      0 0 0 1 a/b 0         (2b-1)/b 0        |

               11       15
o9 : Matrix FF   <-- FF
\end{lstlisting}

Inspecting the entries of $U$, we should enforce the condition $ab\ne 0$ in order for the elimination template to specialize. Furthermore, computing a Gr\"{o}bner basis with respect to the given lexicographic order shows that the same conditions are sufficient for specialization, giving us the open set
\[ \mathcal{U} = \{(a, b) \in \CC^2 \mid  ab \neq 0\} \]
promised by Theorem~\ref{thm:specialization}.
It may also be of interest to consider a smaller open subset of specializations $P=(a,b)\in \mathcal{U}$ for which the parametric action matrix $A$ has four distinct eigenvalues. We find $A$ below:
\begin{lstlisting}[language=Macaulay2]
i10 : getActionMatrix ET
o10 = {-3} | 0    (ab-2)/a 0         2b2/a |
      {-2} | -2   0        b         0     |
      {-1} | 0    -2       0         b     |
      {0}  | -a/b 0        (-2b+1)/b 0     |

               4       4
o10 : Matrix FF  <-- FF
\end{lstlisting}
By computing the discriminant of its characteristic polynomial, we see that this parametric action matrix has four distinct eigenvalues for all parameters in the open set
\[
\{ (a,b) \in \mathcal{U} \mid a-4 \ne 0, \, 4 ab^2 -1 \ne 0, \, ab + 4b - 2 \ne 0\} .
\]
\end{example}

\subsection{Using Elimination Templates to Solve $0$-Dimensional Systems}\label{subsec:zero-dim}

For completeness, we briefly describe how Theorem~\ref{thm:eigenvector} can be used to find the points in $V(I).$ We retain the notation established in Theorem~\ref{subsec:constructing-templates}. Assume that $f\in R$ is chosen such that all eigenvalues of the action matrix have algebraic multiplicity $1$.
If $\lambda$ is one such eigenvalue, then Theorem~\ref{thm:eigenvector} gives a corresponding eigenvector $$v = v( \mathcal{B})_{x=a} = (a^{\alpha_1}, \ldots , a^{\alpha_{d-1}}, 1)$$ 
for some $a=(a_1,\ldots , a_n)\in V(I).$
When $f$ is a general linear form, we may assume its support contains every variable in the ring $R.$ 
Thus, using the template construction of Definition~\ref{def:e-template}, we see that every variable appears either in the set of basic variables $\mathcal{B} (H)$ or in the set of excessive monomials $\mathcal{E} (H).$ 
In the former case, $x_i \in \mathcal{B} (H),$ and the coordinate $a_i$ can be located as a coordinate of the eigenvector $v.$

In the latter case, taking $H= H_0$, any variable $x_i\in \mathcal{E} (H_0)$  must still appear as a pivot column in $M$.
In other words, the template matrix encodes an equation linear in the variable $x_i.$
Let us list all such variables in order, $x_{i_1},\dots,x_{i_l}$ with $i_1<\cdots<i_l.$ For $j=1,\dots,l$, there are elements $g_j\in G$ such that $$g_j=x_{i_j}+ \text{ terms involving } x_i \text{ for } i>i_j.$$ On evaluating at $P=(a_1,\dots,a_n),$ we obtain $$0=a_{i_j}+\text{ terms involving } a_i \text{ for } i>i_j.$$ We can first solve for $a_{i_l}$ as the values of $a_i$ for $i>i_l$ are known to us from previous computations. Once we know $a_{i_l}$, we can solve for $a_{i_{l-1}}$, and so on.

\section{Package Design and Functionality}\label{sec:design}

In this section, we will describe the functionality provided by the \texttt{EliminationTemplates} package and various strategies which implement the constructions of the previous section.

Thus, in general, all coordinates of all points in $V(I)$ may be recovered by row-reducing $M$ and computing the eigenvalues of $A.$ 
For numerical stability, we row-reduce $M$ using $LU$ decomposition with partial pivoting.

\subsection{Data Structures and Functions}\label{subsec:data-functions}

Our package consists of one main data type, \texttt{EliminationTemplate}. One way to call the corresponding \texttt{eliminationTemplate} constructor is to pass arguments of class \texttt{RingElement}, representing the action variable $f$ defining the action matrix, and a zero-dimensional ideal $I$, of class \texttt{Ideal}. Objects in this class store the necessary data required for an elimination template computation. Objects of class \texttt{EliminationTemplate} are designed to cache information such as the monomial basis $\mathcal{B}$, the template matrix $M$, and the action matrix $A$, so that they only need to be computed once. 

We illustrate our package data structures and functionality with a running example. First, we create a new template object, as shown in the code below:

\begin{center}
\begin{lstlisting}[language=macaulay2]
i1: needsPackage "EliminationTemplates";
i2: R = QQ[x,y];
i3: I = ideal(x^4+x*y+y^2-3, x^2*y+y^3-2);
i4: E = eliminationTemplate(x,I)
o4: action variable: x
\end{lstlisting}
\end{center}

The method \texttt{getTemplate} accepts an \texttt{EliminationTemplate} object and outputs a tuple of objects of type \texttt{List} containing a \texttt{ShiftSet}, as in (\ref{EqShiftSet}), and \texttt{MonomialPartition} associated to the matrix $H$ in (\ref{EqH_{FtoA}}). Alternatively, we can call \texttt{getTemplate} using the inputs \texttt{RingElement}, \texttt{Matrix}, which contains a basis for $R/I$, and \texttt{Ideal} $I$. The user is not expected to interact with the internal data types \texttt{ShiftSet} and \texttt{MonomialPartition}, which are cached inside of the \texttt{EliminationTemplate} object.
The \texttt{getTemplateMatrix} method takes in an object of class \texttt{EliminationTemplate} and returns the associated template matrix.

\begin{center}
\begin{lstlisting}[language=macaulay2]
i5: getTemplateMatrix(E)
o5: | 1 0 0 0 0 0 0  0 1 1 0 0 0 0  0 -3 0 0  0  |
    | 0 1 0 0 0 0 0  0 0 0 0 1 1 0  0 0  0 -3 0  |
    | 0 0 0 0 1 0 0  0 0 0 0 0 0 0  1 1  0 0  -3 |
    | 1 0 1 0 0 0 -2 0 0 0 0 0 0 0  0 0  0 0  0  |
    | 0 1 0 1 0 0 0  0 0 0 0 0 0 -2 0 0  0 0  0  |
    | 0 0 0 0 0 1 0  0 0 1 0 0 0 0  0 0  0 -2 0  |
    | 0 0 0 0 0 0 1  0 0 0 0 0 1 0  0 0  0 0  -2 |
\end{lstlisting}
\end{center}

The method \texttt{templateSolve} takes as input either an object of class \texttt{EliminationTemplate} or a \texttt{RingElement} $f$ and an \texttt{Ideal} $I$ with common polynomial ring $R$, and outputs a list of solutions. 
Each solution is a list of complex coordinates, one per ring variable. We exemplify the use of this function below.

\begin{center}
\begin{lstlisting}[language=macaulay2]
i6: sols1 = templateSolve(E);
i7: sols2 = templateSolve(x,I);
i8: assert(all(sols1, x -> 1e-6 > norm sub(sub(gens I, QQ[gens R]), matrix{x})))
o8: true
\end{lstlisting}
\end{center}

Using \texttt{copyTemplate}, we can create a copy of an \texttt{EliminationTemplate}, resulting in an \texttt{EliminationTemplate} instance defined using the same action variable but a different defining ideal.

\begin{center}
\begin{lstlisting}[language=macaulay2]
i9: J = ideal(x^4 + 2*x*y + y^2-1, 3*x^2*y + y^3 - 5);
i10: F = copyTemplate(E, J);
i11: sols3 = templateSolve(F);
i12: assert(all(sols3, x -> 1e-6 > norm sub(sub(gens J, QQ[gens R]), matrix{x})))
o12: true
\end{lstlisting}
\end{center}

\subsection{Additional Strategies}\label{subsec:strategies} 
There are three strategies which we allow the user to manually input, each which determine the matrix $H$ in equation~\ref{EqH_{FtoA}}, namely 
\begin{enumerate}[label=\arabic*.]
\item the default strategy
\item the strategy \texttt{"Larsson"}, implementing the syzygy-based reduction of $H$~\cite{Larsson}
\item the strategy \texttt{"Greedy"}, implementing the linear algebra-based reduction of $H$ introduced in~\cite{greedy}.
\end{enumerate}

As in Remark~\ref{remark:H}, equation~\eqref{eq:Hmatrix-Rs} remains valid if $H$ is replaced by any matrix of the form
\[
H = H_0 + \Theta \cdot H_1,
\]
where the rows of $H_1$ generate the first syzygy module of $F$.

In the default strategy, we simply set $H = H_0$ as in Equation~\eqref{EqH_{FtoA}}. No reduction is performed, and the resulting template is typically far from optimal.

In the Larsson strategy, we exploit the fact that $H_0$ can be replaced by $H_0 + \Theta \cdot H_1$, where the rows of $H_1$ generate the first syzygy module of $F$. We compute a Gröbner basis of the syzygy module with respect to a degree-first term order and reduce each row of $H_0$ to its normal form. This minimizes the maximum degree among equivalent representations~\cite{Larsson}, which often reduces template size in practice. 

In the greedy strategy, the matrix $\Theta$ is selected greedily, with the goal of reducing the number of polynomial shifts and excessive monomials, and hence the size of the template at each step. This is more expensive than the Larsson strategy, but directly targets template size. We refer to~\cite{greedy} for more details.
Example~\ref{ex:strategies} below illustrates how different strategies may be used to reduce template size.

\section{Examples}\label{sec:examples}

\begin{example}\label{ex:specialization}

As a basic demonstration, suppose we wanted to find all complex solutions to the system
\begin{equation}
    \begin{cases}
        x^2+y^2-1 &= 0 \\
        x^2+y^3+xy-2 &= 0
    \end{cases}
\end{equation}
To do this, we use the linear form $f = x + 4y$ to construct our elimination template.
\begin{center}
\begin{lstlisting}[language=macaulay2]
i1: R = QQ[x,y];
i2: I = ideal(x^2+y^2-1,x^2+y^3+x*y-2);
i3: E = eliminationTemplate(x+4*y, I);
\end{lstlisting}
\end{center}
The elimination template yields a set of solutions, which we can then check
\begin{center}
\begin{lstlisting}[language=macaulay2]
i4: sols = templateSolve(E);
i5: all(sols, x -> 1e-6 > norm sub(sub(gens I, QQ[gens R]), matrix{x}))
o5: true
\end{lstlisting}
\end{center}
We now illustrate how \texttt{copyTemplate} can be used to exploit the specialization property described in Theorem~\ref{thm:specialization}, by using the previously-constructed template to solve the new system given as follows:
\begin{equation}
    \begin{cases}
        x^2+y^2-2 &= 0 \\
        x^2+y^3+3xy-5 &= 0
    \end{cases}
\end{equation}
To solve this new system, we simply run \texttt{copyTemplate} and then \texttt{templateSolve}.
\begin{center}
\begin{lstlisting}[language=macaulay2]
i6: J = ideal(x^2+y^2-2,x^2+y^3+3*x*y-5);
i7: F = copyTemplate(E, J);
i8: sols = templateSolve(F);
i9: all(sols, x -> 1e-6 > norm sub(sub(gens J, QQ[gens R]), matrix{x}))
o9: true
\end{lstlisting}
\end{center}

\end{example}

\begin{example}\label{ex:quaternions}
We now explain in detail how our package may be used to solve the camera pose problem introduced in Example~\ref{ex:intro-example}. First we generate synthetic data representing two general lines $L_1, L_2$ in $\mathbb{P}^3$:
\begin{center}
\begin{lstlisting}[language=macaulay2]
i1: FF = QQ;
i2: a = random(FF^3, FF^1) || matrix{{1}};
i3: b1 = random(FF^3, FF^1) || matrix{{1}};
i4: b2 = random(FF^3, FF^1) || matrix{{1}};
\end{lstlisting}
\end{center}
We implement a function \texttt{Q2R} which implements the quaternion map~\eqref{eq:quaternion}
\begin{center}
\begin{lstlisting}[language=macaulay2]
i5: Q2R = (w,x,y,z) -> matrix{
        {w^2+x^2-y^2-z^2, 2*x*y-2*w*z, 2*w*y+2*x*z},
        {2*x*y+2*w*z, w^2-x^2+y^2-z^2, -2*w*x+2*y*z},
        {-2*w*y+2*x*z, 2*w*x+2*y*z, w^2-x^2-y^2+z^2}
    };
\end{lstlisting}
\end{center}
Next, we need to give two lines $\ell_1, \ell_2\subset \mathbb{P}^2$. In practice these data would come from experiments using a physical camera which would ensure the existence of a solution. For the purpose of generating a template, however, we may generate a ground-truth camera and use it to generate the line images:
\begin{center}
\begin{lstlisting}[language=macaulay2]
i6: (w0, x0, y0, z0, f0) := (random FF, random FF, random FF, random FF, random FF);
i7: R0 := Q2R(w0, x0, y0, z0);
i8: P0 := diagonalMatrix{f0, f0, 1} * (R0 | matrix{{0},{0},{0}});
i9: l1 = gens ker transpose(P0 * (a | b1));
i10: l2 = gens ker transpose(P0 * (a | b2));
\end{lstlisting}
\end{center}
Here $w_0,x_0,y_0,z_0,f_0$ are the camera parameters we wish to recover which in practice would come from a physical camera. In the above code, the lines $\ell_i$ are represented using implicit equations. Using this, we generate the camera matrix $P$ and the ideal which represents the constraints on $P$.
\begin{center}
\begin{lstlisting}[language=macaulay2]
i11: S = FF[w..z, f];
i12: R = Q2R(w,x,y,z);
i13: P = diagonalMatrix{f,f,1} * (R | matrix{{0},{0},{0}});
i14: I = ideal(w^2+x^2+y^2+z^2-1, transpose l1 * P * a, transpose l1 * P * b1, transpose l2 * P * a, transpose l2 * P * b2);
\end{lstlisting}
\end{center}
Now we can use our package to solve for the camera parameters $w_0,x_0,y_0,z_0,f_0$.
\begin{center}
\begin{lstlisting}[language=macaulay2]
i15: l = random(1, ring I);
i16: ET = eliminationTemplate(l, I);
i17: sols = templateSolve ET;
\end{lstlisting}
\end{center}
Finally, we verify that the ground truth camera is recovered as one of the solutions.
\begin{center}
\begin{lstlisting}[language=macaulay2]
i18: groundTruthSolution = matrix{(1/sqrt(w0^2+x0^2+y0^2+z0^2)*{w0,x0,y0,z0})|{f0}};
i19: position(sols, x -> norm(matrix{x} - groundTruthSolution) < 1e-10)
o19: {0}
\end{lstlisting}
\end{center}
In this formulation, the elimination template solver is efficient with the default settings, and the action matrix is $16 \times 16.$ Inspecting the solutions, we see that there are a number of symmetries; these arise (for example) due to the $2$-$1$ nature of the quaternion map~\eqref{eq:quaternion}.
Thus, symbolic symmetry-reduction techniques~\cite{DBLP:conf/eccv/LarssonA16} and analysis of this minimal problem's Galois group~\cite{duff2025galois,duff2022galois} would likely lead to further improvements in efficiency. We leave the task of integrating these tools into the \texttt{EliminationTemplates} package as future work.
\end{example}

\begin{example}\label{ex:computer-vision}
The problem of 5-point essential matrix estimation is an important and well-known minimal problem, which is widely used for estimating the relative orientation of two calibrated cameras. The first practical solution to this problem, due to Nist\'{e}r~\cite{nister2004efficient}, involved a careful derivation reducing this problem to computing the roots of a degree-$10$ polynomial.
The code below provides one means of automating this reduction.
Our goal is to estimate three variables $x,y,z$ representing the  matrix
\begin{equation}
    E = x E_1 + y E_2 + z E_3 + E_4,
\end{equation}
where we are given the $3 \times 3$ $E_1, E_2, E_3, E_4$. This matrix subject to 10 cubic Demazure constraints:
\begin{equation}
    EE^TE - \frac{1}{2} \tr (E^TE)E = 0 ,
    \quad 
    \det E = 0.
\end{equation}
Given rational-valued data $E_1, \ldots , E_3 \in \mathbb{Q}^{3\times 3},$ these constraints give us 10 polynomials in $\QQ[x,y,z]$. As in the previous example, we generate our template using random rational data.

\begin{center}
\begin{lstlisting}[language=macaulay2]
i1: setRandomSeed 0;
i2: R = QQ[x, y, z];
i3: Es = apply(4, i -> random(QQ^3, QQ^3));
i4: E = x * Es#0 + y * Es#1 + z * Es#2 + Es#3;
i5: I = ideal(E * transpose E * E - (1/2) * trace(E * transpose E) * E, det E);
i6: ET = eliminationTemplate(l, I);
i7: sols = templateSolve(ET);
i8: all(sols, x -> 1e-6 > norm sub(sub(gens I, CC[gens R]), matrix{x}))
i10: true
\end{lstlisting}
\end{center}
The optimal template size for this example is simply the $10\times 20$ matrix with rows indexed by the Demazure constraints; in other words, the rows depending on $s$ in Definition~\ref{def:e-template} are omitted.
\begin{center}
\begin{lstlisting}[language=macaulay2]
i11 :  M = getTemplateMatrix ET;

               10       20
o11 : Matrix QQ   <-- QQ        
    \end{lstlisting}
\end{center}
Writing $M = \left( \begin{array}{c|c} M_1 & M_2 \end{array} \right)$ with $M_1, M_2 \in \mathbb{Q}^{10\times 10},$ and assuming the parameters are sufficiently generic, we see that the associated action matrix is simply $M_1^{-1} M_2$. 
In practice, the parameters $E_0, \ldots , E_3$ determining this matrix are obtained by from matched pairs of 2D image points.
When solving for new parameter values, we may use \texttt{copyTemplate} just as we did in Example~\ref{ex:specialization}.
\begin{center}
\begin{lstlisting}[language=macaulay2]
i12: Es = apply(4, i -> random(QQ^3, QQ^3));
i13: E = x * Es#0 + y * Es#1 + z * Es#2 + Es#3;
i14: J = ideal(E*transpose E * E - (1/2) * trace(E * transpose E) * E);
i15: ET' = copyTemplate(ET, J);
i16: sols = templateSolve(ET');
i17: all(sols, x -> 1e-6 > norm sub(sub(gens J, CC[gens R]), matrix{x}))
o17: true
\end{lstlisting}
\end{center}

\end{example}

\begin{example}\label{ex:strategies}
Our last example illustrates the flexibility of the \texttt{EliminationTemplates} package for solving larger minimal problems and the different strategies that may be used. 
Like the previous example, this problem comes from estimating the relative pose of two cameras.
In this case, however, the two cameras share an unknown focal length. 
This leads to the following generalized Demazure constraints 
\begin{equation}
\det F = 0,
\end{equation}
\begin{equation}
F Q F^T Q F - \tfrac{1}{2} \tr(F Q F^T Q) F = 0,
\end{equation}
for the $3\times 3$ polynomial matrices
\begin{equation}\label{eq:F-Q-constraints}
F = F_1 + y F_2 + z  F_3,
\quad 
Q = \operatorname{diag} (1, 1, x),
\end{equation}
where $F_1, F_2 , F_3$ are $3\times 3$ matrices of parameters and and $x,y,z$ are unknown as in Example~\ref{ex:computer-vision}. 
For more background on this problem, we refer the reader to sources~\cite{MR3833649} and~\cite{greedy} which study similar formulations.

For this larger minimal problem, it is more effective to solve the \emph{offline problem} of constructing the template matrix over a finite field $\mathbb{K} = \mathbb{Z} / p \mathbb{Z}$ of large, prime characteristic.
This is a standard trick which allows the offline phase to run faster by reducing coefficient swell in Gr\"{o}bner basis calculations; using rational reconstruction techniques, it is possible to recover a template matrix over $\mathbb{Q}$ from several carefully chosen primes. 
Below, we simply fix $p=32749$ for illustrative purposes.

\begin{lstlisting}[language=macaulay2]
i1: setRandomSeed 0
i2: FF = ZZ/32749;
i3: R = FF[x,y,z];
i4: Fs = apply(4, i -> random(QQ^3, QQ^3));
i5: F = Fs#0 + y*Fs#1 + z*Fs#2;
i6: Q   = diagonalMatrix{1_R, 1_R, x};
i7: I = ideal(2*F*Q*transpose(F)*Q*F -trace(F*Q*transpose(F)*Q)*F) + ideal(det F);
\end{lstlisting}

For this example, we compare the template sizes produced by the default strategy and optional strategies \texttt{"Larsson"} and \texttt{"Greedy"} below.
As the monomial order in our code above is unspecified, all three templates use the \texttt{GRevLex} order with $x>y>z.$

As determined by the template size alone, the \texttt{"Greedy"} strategy is the top-performer, followed by \texttt{"Larsson"}, then finally the default strategy. This is shown below.

\begin{lstlisting}[language=macaulay2]
i8: ET = eliminationTemplate(x, I);
i9: getTemplateMatrix(ET);
               53       73
o9 : Matrix FF   <-- FF
i10:
getTemplateMatrix(ET, Strategy => "Larsson");   -- Larsson  :  53 x  73
               53       73
o10 : Matrix FF   <-- FF
i11:
getTemplateMatrix(ET, Strategy => "Greedy")    -- Greedy   :  31 x  50

               31       50
o11 : Matrix FF   <-- FF
\end{lstlisting}

The user running these lines of code will surely observe that, in terms of the time needed to generate the template, the order of these methods is reversed; the default strategy is faster than \texttt{"Larsson"}, and both are significantly faster than \texttt{"Greedy"}.
However, template generation is an offline task which, in principle, only needs to be performed once. 
To properly evaluate the \emph{online} minimal solvers produced by these templates, one must copy each template into a ring of characteristic $0$ and measure both the runtime and residuals produced by \texttt{templateSolve}, as we have demonstrated in previous examples.
The \texttt{EliminationTemplates} package enables users, for the first time, to perform the complete workflow of constructing, copying, and evaluating the performance of template-based solvers entirely within \texttt{Macaulay2}.

\end{example}

\section*{Acknowledgements}

The Macaulay2 workshop at University of Wisconsin-Madison in June-July 2025 served as a first introduction for many of the authors. We thank the organizers, the Department of Mathematics at University of Wisconsin-Madison, and the National Science Foundation for supporting the workshop through NSF Grant DMS-2508868.
We also thank Ikenna Nometa for helpful conversations early on.
MB was supported by NSF Grants DMS-2302430 and DMS-2100288, and by Simons Foundation Grant SFI-MPS-TSM-00012928.

\bibliographystyle{amsplain}
\bibliography{refs.bib}
\end{document}